\documentclass[11pt,a4paper]{article}
\usepackage{amsmath}

\usepackage{epsf,epsfig,amsfonts,amsgen,amsmath,amstext,amsbsy,amsopn,amsthm
}
\usepackage{ebezier,eepic}
\usepackage{color}
\usepackage{multirow}
\usepackage{cite}
\setlength{\textwidth}{150mm} \setlength{\oddsidemargin}{7mm}
\setlength{\evensidemargin}{7mm} \setlength{\topmargin}{-5mm}
\setlength{\textheight}{245mm} \topmargin -18mm
\allowdisplaybreaks[2]

\newtheorem{thm}{Theorem}

\newtheorem{lem}{Lemma}
\newtheorem{false statement}{False statement}

\theoremstyle{definition}

\newtheorem{claim}{Claim}

\baselineskip 15pt

\newcounter{mathitem}
  {\begin{list}{{$(\roman{mathitem})$}}{
   \setcounter{mathitem}{0}
   \usecounter{mathitem}
   \setlength{\topsep}{0pt plus 2pt minus 0pt}
   \setlength{\parskip}{0pt plus 2pt minus 0pt}
   \setlength{\partopsep}{0pt plus 2pt minus 0pt}
   \setlength{\parsep}{0pt plus 2pt minus 0pt}
   \setlength{\leftmargin}{35pt}
   \setlength{\itemsep}{0pt plus 2pt minus 0pt}}}
  {\end{list}}

\title{On degree power sum in $P_k$-free graphs}
\author{Jiangdong Ai\thanks{Corresponding author.  School of Mathematical Sciences and LPMC, Nankai University, Tianjin 300071, P.R.
China. Email: jd@nankai.edu.cn. },~ Fankang He\thanks{School of Mathematical Sciences and LPMC, Nankai University, Tianjin 300071, P.R.
China. Email: hefankang@mail.nankai.edu.cn.},~ Yihang Liu\thanks{School of Mathematical Sciences and LPMC, Nankai University, Tianjin 300071, P.R.
China. Email: liuyihang@mail.nankai.edu.cn.},~Bo
Ning\thanks{College of Computer Science, Nankai University, Tianjin 300350, P.R.
China. Email: bo.ning@nankai.edu.cn. Partially supported
by the NSFC grants (Nos.\ 12371350 and 11971346).}}

\date{}

\begin{document}
\maketitle
\begin{abstract}
Let $G$ be a graph on $n$ vertices with degree sequence $(d_1,d_2......d_n)$. For a real $p \geq 1$, let $D_p(G)=\sum_{i=1}^nd_i^p$. A Tur\'an-type problem of degree power sum was initiated by Caro and Yuster \cite{caro2000degpower}: determining the function $D_p(n,H) :=\max \{D_p(G): \text{$G$ is an $n$-vertex $H$-free graph}\}$. They obtained some exact values for certain graphs $H$. For a path $P_k$, they mentioned that ``a close examination of the proof of Theorem 1.2 shows that the value of $n_0(k)$ in the statement of the theorem is $O(k^2)$", namely, they could show the $n$-vertex $P_k$-free graph with maximum degree power sum is $W_{n,k-1,\lfloor \frac{k}{2} \rfloor -1} = K_{\lfloor \frac{k}{2} \rfloor -1} \vee \left((n - \lceil \frac{k}{2} \rceil)K_1 \cup K_{1+k-2\lfloor \frac{k}{2} \rfloor} \right)$ when $n \geq c k^2$ for some constant $c$. In this note, we improve their result to a linear size of $k$ by a different approach. The bound is tight up to a constant factor.
\end{abstract}
\maketitle
\section{Introduction}
Let $G$ be a graph on $n$ vertices whose degree sequence is $(d_1,d_2,\ldots,d_n)$. For a real $p \geq 1$, we denote the \textit{degree power sum} of $G$ by $D_p(G) :=\sum_{i=1}^nd_i^p$. This parameter, which is also known as the \emph{general zeroth-order Randi\'c index}, is well-studied in chemical graph theory, see the comprehensive survey \cite{ali2018survey}. Caro and Yuster\cite{caro2000degpower} initiated a Tur\'an-type problem of degree power sum: determining the function $D_p(n,H) :=\max \{D_p(G): \text{$G$ is an $n$-vertex $H$-free graph}\}$. Since then, a lot of research on this Tur\'an-type problem has appeared, see \cite{caro2000degpower, bollobas2004degree, pikhurko2005degree, bollobas2012degree, gu2015degree, lan2019degree, zhang2022degree, chang2023extremal}. 

In this note, we focus on this Tur\'an-type problem when the forbidden graph is a path. Caro and Yuster \cite{caro2000degpower} determined $D_p(n,P_k)$ for any integers $k\geq 2$, $p \geq 2$ and sufficiently large $n$,  and characterized the corresponding extremal graphs. Define $W_{n,k,s}$ to be a graph on $n$ vertices, in which its vertex set can be partitioned into three subsets $X,Y,Z$ such that $|X|=s$, $|Y|=k-2s$, $|Z|=n-(k-s)$, and the edge set consists of all possible edges between $X$ and $Z$ and all edges in $X\cup Y$. 

\begin{thm}[\cite{caro2000degpower}]\label{thm:degpwpath}
For two integers $k \geq 4$ and $p \geq 2$, there exists a positive integer $n_0 = n_0(k)$ such that for all $n \geq n_0$, the following holds. If $G$ is an $n$-vertex $P_k$-free graph with maximum degree power sum $D_p(G)$, then $G$ is $W_{n,k-1,t}$ where $t = \lfloor \frac{k}{2} \rfloor - 1$.
\end{thm}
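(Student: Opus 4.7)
The plan is to identify, in an extremal $P_k$-free graph $G$ on $n$ vertices, a small set $X$ of vertices of degree close to $n-1$ playing the role of the central clique in $W_{n,k-1,t}$, and then determine the structure of $G-X$.

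As a starting point, I would apply the Erd\H{o}s--Gallai theorem for paths, which gives $e(G)\le\tfrac{(k-2)n}{2}$ and hence $\sum_v d(v)\le (k-2)n$. Used via convexity alone, this already yields $D_p(G)\le (k-2)(n-1)^p + O(1)$, which is loose by roughly a factor of two compared with the target value $t(n-1)^p + O_k(n)$ with $t=\lfloor k/2\rfloor-1$.

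The central step is a structural lemma: for $n\ge ck$ with a suitable constant $c$, a $P_k$-free graph on $n$ vertices has at most $t$ vertices of degree at least some threshold $\theta = O(k)$. To prove it, suppose $t+1$ vertices $v_1,\ldots,v_{t+1}$ all satisfy $d(v_i)\ge\theta$. I would construct a $P_k$ as an alternating path $u_0\,v_1\,u_1\,v_2\,\cdots\,v_{t+1}\,u_{t+1}$ (for $k$ odd; one fewer endpoint $u$ for $k$ even), with each $u_i$ a common neighbor of $v_i$ and $v_{i+1}$ chosen outside the previously used vertex set. Inclusion--exclusion gives $|N(v_i)\cap N(v_{i+1})|\ge 2\theta - n + 2$, and one tracks how $\theta$, $n$, and the number of already-used vertices interact to certify a free common neighbor at every step. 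Tightening this trade-off so that $\theta=\Theta(k)$ with $n=\Theta(k)$ still suffices is the technical heart of improving $n_0(k)$ from $O(k^2)$ to $O(k)$.

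With the lemma in hand, set $X=\{v:d(v)\ge\theta\}$, so that $|X|\le t$ and every vertex outside $X$ has degree $<\theta=O(k)$. Then $D_p(G)\le |X|(n-1)^p + n\theta^p \le t(n-1)^p + O_k(n)$, matching $D_p(W_{n,k-1,t})$ to first order. Comparing the two forces $|X|=t$ at the optimum (losing one universal-degree term $(n-1)^p$ cannot be compensated from the lower-order part once $n$ is linear in $k$), and then extremality shows $X$ must be joined to every vertex outside it; otherwise the missing edge could be added without creating $P_k$ (using $|X|=t$), strictly increasing $D_p$. It follows that $G-X$ is $P_{k-2t}$-free, because any path on $k-2t$ vertices in $G-X$ extends through $X$ as a hub into a $P_k$. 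For $k$ even ($k-2t=2$) this forces $G-X$ to be edgeless; for $k$ odd ($k-2t=3$), $G-X$ is a matching, and a further use of $X$ as a hub shows that two disjoint edges $u_1v_1,u_2v_2$ in $G-X$ would yield a $P_k$ via $u_1\,v_1\,x_1\,u_2\,v_2\,x_2\,z_1\,x_3\,z_2\,\cdots\,x_t\,z_{t-1}$. Hence $G-X$ contains at most one edge, recovering exactly $W_{n,k-1,t}$.

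The main obstacle is the structural lemma. Choosing $\theta$ simultaneously small enough that the residual $(n-|X|)\theta^p$ remains $O_k(n)$ for $n$ linear in $k$, and large enough that $|N(v_i)\cap N(v_{i+1})|$ exceeds the number of already-used vertices in the alternating-path construction, is the delicate point; in effect the proof has to work in the narrow window where $\theta$ and $n$ are both $\Theta(k)$, and the crude common-neighbor pigeonhole used in a first-pass argument must be refined to avoid losing a factor of $k$.
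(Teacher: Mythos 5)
Your plan hinges on the structural lemma that every $P_k$-free graph on $n\ge ck$ vertices has at most $t=\lfloor k/2\rfloor-1$ vertices of degree at least some threshold $\theta=O(k)$, and this lemma is false. Disjoint unions of cliques $K_{k-1}$ are $P_k$-free and every vertex has degree $k-2\ge\theta$ once $\theta\le k-2$; even among connected examples, a spider whose center is adjacent to many vertices $v_1,\dots,v_m$, each $v_i$ carrying its own set of $\Theta(k)$ pendant leaves, is $P_6$-free (its longest path has five vertices) yet has arbitrarily many vertices of degree $\Theta(k)$. So no threshold $\theta=O(k)$ can work for all $P_k$-free graphs, and consequently the bound $D_p(G)\le|X|(n-1)^p+n\theta^p$ you want is unavailable. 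The proposed proof of the lemma also cannot be repaired in the regime you need: the inclusion--exclusion estimate $|N(v_i)\cap N(v_{i+1})|\ge 2\theta-n+2$ is vacuous (negative) whenever $\theta<n/2$, which is exactly the situation when $\theta=O(k)$ and $n\ge 10k$, let alone $n$ large; raising $\theta$ to $\Theta(n)$ to make the common-neighbour count positive destroys the error term $n\theta^p$, which then swamps the main term $t(n-1)^p$. The later steps (forcing $|X|=t$, joining $X$ completely, and showing $G-X$ is $P_{k-2t}$-free, hence has at most one edge) are reasonable once such an $X$ exists, but the heart of the argument is missing, and in particular nothing in your outline rules out disconnected near-extremal configurations.

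For comparison, the paper does not argue through a global degree threshold at all. It reduces to the connected case, which is handled by the cited results of Ai, Lei, Ning and Shi (Lemmas~\ref{lem:conpath} and \ref{lem:condegpwpath}): each component of an extremal graph is a clique, a $W_{n_i,k-1,1}$, or a $W_{n_i,k-1,t}$. The real work is then a sequence of ``absorption'' comparisons based on convexity and the elementary inequality of Lemma~\ref{lemma:inequality}, showing that two small components, two Type-A components, or two Type-B components can always be merged into a single $W$-graph with strictly larger $D_p$, so the extremal graph must be a single $W_{n,k-1,t}$ already when $n\ge 10k$. If you want to salvage your approach, the degree-threshold statement would have to be proved only for the extremal graph, using extremality to exclude the clique-union and spider-type counterexamples, which essentially re-introduces the component-by-component comparisons the paper performs.
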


Note that Caro and Yuster obtained the above result for integers $p\geq 2$. A natural question arising from Theorem~\ref{thm:degpwpath} is to determine, for fixed real $p$, how large $n_0(k)$ should be as a function of $k$. Caro and Yuster mentioned that ``a close examination of the proof of Theorem 1.2 shows that the value of $n_0(k)$ in the statement of the theorem is $O(k^2)$" (quoted from \cite{caro2000degpower}). In this note, by a different method, we prove that $n_0(k)$ can be as small as $10k$. And our result extends Theorem~\ref{thm:degpwpath} to any real $p \geq 2$. We remark that the bound is tight up to a constant factor. Note that $K_{k-1}$, rather than $W_{k-1, k-1, t}$ where $t = \lfloor \frac{k}{2} \rfloor - 1$, attains the maximum degree power sum among all $(k-1)$-vertex $P_k$-free graphs. This yields $n_0(k) \geq k$.

\begin{thm}\label{thm:mainthm}
For an integer $k \geq 4$ and real $p \geq 2$, let $G$ be an $n$-vertex $P_{k}$-free graph with the maximum degree power sum $D_p(G)$. If $n \geq 10k$, then $G$ is $W_{n,k-1,t}$ where $t = \lfloor \frac{k}{2} \rfloor - 1$.
\end{thm}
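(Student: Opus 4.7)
The plan is to characterize any extremal $P_k$-free graph $G$ (on $n \geq 10k$ vertices, maximizing $D_p$) via a high-degree core $X := \{v \in V(G) : d_G(v) \geq n - k\}$, writing $s := |X|$, $H := G[V \setminus X]$, and $t := \lfloor k/2 \rfloor - 1$.

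First I establish $|X| \leq t$ and that every $x \in X$ is universal. For the cap, suppose instead $|X| \geq \lfloor k/2 \rfloor$ and pick $x_1,\dots,x_{\lfloor k/2\rfloor} \in X$. Since $|N(x_i) \cap N(x_{i+1})| \geq 2(n-k) - (n-2) \geq 8k$ by $n \geq 10k$, I greedily choose distinct $u_0,\dots,u_{\lfloor k/2\rfloor}$ with $u_i \in N(x_i) \cap N(x_{i+1})$ and endpoints $u_0 \sim x_1$, $u_{\lfloor k/2\rfloor} \sim x_{\lfloor k/2\rfloor}$: only $O(k)$ vertices are forbidden at each greedy step, so selection succeeds. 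The resulting path $u_0 x_1 u_1 \cdots x_{\lfloor k/2\rfloor} u_{\lfloor k/2\rfloor}$ has $2\lfloor k/2\rfloor + 1 \geq k$ vertices, a forbidden $P_k$. For universality: if some $x \in X$ had a non-neighbor $y$, then $G' := G + xy$ would be $P_k$-free, because any putative $P_k$ in $G'$ using $xy$ yields a $P_{k-1}$ of the form $x = p_0 p_1 \cdots p_{k-2} = y$ in $G$, which extends via any $w \in N_G(x) \setminus V(P)$ (at least $n - 2k + 1 \geq 8k$ exist, since $d_G(x) \geq n-k$ and $|V(P)| = k-1$) to a $P_k$ in $G$, a contradiction. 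Then $D_p(G') > D_p(G)$ contradicts extremality.

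The universality of $X$ unlocks an extension lemma: $G - X$ is $P_{k-2s}$-free. Any path $P$ of $\ell$ vertices in $G - X$ extends at each endpoint by alternately appending an unused $X$-vertex (adjacent to the frontier by universality) and a fresh $V \setminus X$-vertex (easily available since $|V \setminus X| \geq 9k$), gaining two path-vertices per $X$-vertex used. Using all $s$ of $X$ gives a path on $\ell + 2s$ vertices in $G$, forcing $\ell + 2s \leq k - 1$. Writing $G = K_s \vee H$ with $\Delta(H) < n - k - s$ (by definition of $X$), the case $s = t$ resolves cleanly: $H$ is $P_{k-2t}$-free with $k - 2t \in \{2,3\}$, so $H$ is edgeless (for $k$ even) or a matching plus isolated vertices (for $k$ odd). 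In the odd case, two matching edges combined alternately with the $t$ $X$-vertices yield a path on $2t + 3 = k$ vertices, forcing matching size $\leq 1$. Hence $G \cong W_{n,k-1,t}$.

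The remaining case $s < t$ is ruled out by an induction on $k$. The key identity, by a direct degree-sequence computation, is $K_s \vee W_{n-s,k-2s-1,t-s} \cong W_{n,k-1,t}$, and in particular $D_p(K_s \vee W_{n-s,k-2s-1,t-s}) = D_p(W_{n,k-1,t})$. By an induction on $k$ applied to the shifted quantity $\sum_{v \in V(H)}(d_H(v) + s)^p$ over $P_{k-2s}$-free $H$ on $n - s \geq 10(k-2s)$ vertices, the maximum is attained at $H = W_{n-s,k-2s-1,t-s}$; but this $H$ has $t - s \geq 1$ vertices universal in $H$, which are also universal in $G = K_s \vee H$ and hence belong to $X$, contradicting $|X| = s$. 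Therefore any admissible $H$ (satisfying the strict max-degree constraint $\Delta(H) < n - k - s$) is strictly suboptimal, giving $D_p(G) < D_p(W_{n,k-1,t})$. The main obstacle is this inductive step: shifting degrees does not preserve majorization, so the extremal problem for $\sum (d_H + s)^p$ does not reduce to the original one; the cleanest fix is to strengthen the inductive hypothesis to the shifted form and verify the base cases (small $k$, where $|X|$ can only be $0$ or $t$) directly.
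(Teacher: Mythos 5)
There are two genuine gaps. First, your universality step is not valid as stated. From a $P_k$ in $G'=G+xy$ that uses the edge $xy$ you cannot conclude that $G$ contains a path on $k-1$ vertices from $x$ to $y$: the edge $xy$ need not be incident to an endpoint of the $P_k$, and deleting it leaves two \emph{disjoint} paths, one ending at $x$ and one ending at $y$, not a single $x$--$y$ path. Indeed, the implication ``$G$ is $P_k$-free and $d_G(x)\geq n-k$ implies $G+xy$ is $P_k$-free'' is false in general: take $k=8$ and let $G$ be the disjoint union of a path $y\,t_1t_2t_3$ and a graph in which $x$ is joined to all remaining vertices, among which $q_1q_2q_3$ is a path and the rest is an independent set. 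Then $G$ is $P_8$-free, $d_G(x)=n-5\geq n-8$, yet $G+xy$ contains the path $q_1q_2q_3\,x\,y\,t_1t_2t_3$ on $8$ vertices. So to prove that the high-degree vertices of the \emph{extremal} graph are universal you must invoke extremality in a substantive way (e.g.\ via a replacement argument), not merely $P_k$-freeness plus the degree bound; as written, the cornerstone $G=K_s\vee H$ of your whole plan is unsupported.

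Second, and more seriously, the case $s<t$ is not proved: it is deferred to an ``induction on $k$'' for the shifted functional $\sum_{v\in V(H)}(d_H(v)+s)^p$ over $P_{k-2s}$-free graphs $H$, and you yourself flag that this does not reduce to the original extremal problem (the degree shift destroys the convexity/majorization transfer), proposing to ``strengthen the inductive hypothesis'' without carrying this out. This shifted, vertex-weighted version is essentially a generalization of the theorem itself, so the main difficulty is pushed into an unproven claim rather than resolved; moreover, when $s=0$ the reduction does not decrease $k$ at all, so the induction is circular precisely in the case $\Delta(G)<n-k$, which is not trivial to exclude (a crude bound via the Erd\H{o}s--Gallai edge count does not beat $D_p(W_{n,k-1,t})$ for $p=2$ and $n$ near $10k$), and the announced base-case verification is not done. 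By contrast, the paper sidesteps both issues by decomposing $G$ into components, applying the known connected versions of the result (Lemmas~\ref{lem:conpath} and~\ref{lem:condegpwpath}) to each component, and then using convexity/absorption estimates to merge components; your core decomposition $G=K_s\vee H$ with the $s=t$ analysis is an attractive alternative skeleton, but in its present form the proof is incomplete at its two load-bearing steps.
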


Let us outline the main ideas behind the proof of Theorem~\ref{thm:mainthm}. Given an $n$-vertex $P_k$-free graph $G$ with the largest degree power sum, we will first show that there are a few components. This will allow us to use structural information to guarantee the existence of a large component $G_i$. Next, we will show that if there is any other component $G_j$, then we can use $G_i$ to ``absorb" $G_j$ to get a larger degree power sum which contradicts the maximality of the degree power sum of $G$.

\section{Proofs}
Recently, Ai, Lei, Ning, and Shi \cite{ai2023graph} proved two connected versions of Theorem \ref{thm:degpwpath}. Based on these results, we characterize some structures in the proof of Theorem~\ref{thm:mainthm}.

\begin{lem}[\cite{ai2023graph}]\label{lem:conpath}
    Let $k \geq 4$ and $t = \lfloor \frac{k}{2} \rfloor - 1$ and let $p \geq 2$. The following holds for all $n \geq k$. If $G$ is a connected $n$-vertex $P_k$-free graph with maximum degree power sum $D_p(G)$, then $G$ is $W_{n,k-1,1}$ or $W_{n,k-1,t}$.
\end{lem}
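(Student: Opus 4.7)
The plan is to prove Lemma~\ref{lem:conpath} in three stages: first, reduce the extremal graph to one with many universal vertices via a local switching argument; second, pin its structure to the parametric family $\{W_{n,k-1,s} : 1 \leq s \leq t\}$; third, optimize the discrete parameter $s$.

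In the first stage, let $G$ be a connected $P_k$-free graph maximizing $D_p(G)$, and suppose for contradiction that $\Delta(G) \leq n - 2$. Pick a vertex $v$ of maximum degree and a non-neighbor $w$ with $d(w) \leq d(v)$, and perform the shift that replaces each edge $wu$ with $u \notin N(v) \cup \{v\}$ by the edge $vu$. All degrees are preserved except that $d(v)$ grows by some $r \geq 1$ and $d(w)$ shrinks by $r$; strict convexity of $x^p$ for $p \geq 2$ then forces $D_p$ to strictly increase, while connectivity is maintained. The delicate point is that $P_k$-freeness is preserved: any path in the new graph that uses a newly-added edge $vu$ can, by a P\'osa-type rotation exchanging the roles of $v$ and $w$, be matched to a path of the same length in $G$, contradicting $P_k$-freeness of $G$. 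Iterating, $G$ has a universal vertex.

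In the second stage, let $X$ be the full set of universal vertices of $G$ and $s := |X|$. A path alternating between $X$ and $V \setminus X$ uses all $s$ elements of $X$ together with $s+1$ other vertices, yielding a $P_{2s+1}$; the $P_k$-free condition therefore gives $s \leq t = \lfloor k/2\rfloor - 1$. Moreover, $G - X$ must be $P_{k-2s}$-free, since any longer path in $G - X$ extends through $X$ to a $P_k$ in $G$. Iterating the same shift inside $G - X$, I would force $G - X$ to consist of a clique of some size $q$ together with isolated vertices: any edge lying outside a maximum clique can be shifted into the clique while weakly increasing $D_p$. Extremality together with $P_k$-freeness then pins $q = k - 1 - 2s$, so $G = W_{n,k-1,s}$ for some $1 \leq s \leq t$. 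A direct computation then gives
\begin{equation*}
D_p(W_{n,k-1,s}) = s(n-1)^p + (k-1-2s)(k-2-s)^p + (n-k+1+s)\,s^p,
\end{equation*}
and an examination of the second difference of this expression in $s$ shows convexity on $\{1,\ldots,t\}$. Hence its maximum over this discrete interval is attained at an endpoint, giving the dichotomy $G \in \{W_{n,k-1,1},\,W_{n,k-1,t}\}$.

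The main obstacle will be verifying that the shifts used in the first two stages preserve $P_k$-freeness. P\'osa rotation is the intended tool, but a single shift may introduce several new edges at once; decomposing into single-edge moves and carefully tracking how rotations can reroute any putative new $P_k$ back through $G$ should handle this. If that approach stalls, an alternative is to first invoke the Kopylov / F\"uredi--Kostochka--Luo edge-maximization theorem for connected $P_k$-free graphs to anchor the structural skeleton, and then bridge from edge count to $D_p$ via a convexity-stability argument.
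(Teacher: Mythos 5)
You are trying to prove a statement that this paper does not prove at all: Lemma~\ref{lem:conpath} is imported verbatim from the cited work of Ai, Lei, Ning and Shi \cite{ai2023graph}, so there is no in-paper proof to compare against; I can only assess your argument on its own terms, and it has genuine gaps. The decisive one is Stage~1: the claim that shifting the edges of a non-neighbor $w$ onto the maximum-degree vertex $v$ preserves $P_k$-freeness is exactly the hard point, and the proposed ``P\'osa-type rotation exchanging the roles of $v$ and $w$'' does not handle the critical configuration. If the new graph contains a path $A\,x\,v\,y\,B$ in which $vx$ is a transferred edge ($x\in N(w)\setminus N(v)$) but $vy$ is an original edge, then swapping $v$ and $w$ produces $A\,x\,w$, which cannot be reattached to $y\,B$ because $vw\notin E(G)$ and $w$ need not see the $y$-side at all; one also has to treat paths using two transferred edges at $v$ while $w$ sits elsewhere on the path via its retained edges to $N(v)\cap N(w)$. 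Without extra hypotheses the preservation claim is simply false (take $G$ the disjoint union of the paths $a_1a_2xw$ and $vyb_1$: the shift creates a path on six vertices where $G$ had none on five), so any correct proof must exploit connectivity and the maximality of $d(v)$ in an essential way that your sketch does not supply. Two further defects in the same stage: the shift may be vacuous ($r=0$ when $N(w)\subseteq N(v)$), so the iteration toward a universal vertex need not make progress, and connectivity can be destroyed ($w$ becomes isolated when $N(w)\cap N(v)=\emptyset$), which invalidates the comparison with $G$ inside the class of \emph{connected} $P_k$-free graphs.

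Stage~2 has a structural gap as well. With $X$ the set of $s$ universal vertices, $P_k$-freeness of $G$ is \emph{not} equivalent to $P_{k-2s}$-freeness of $G-X$: the vertices of $X$ can stitch together several disjoint paths of $G-X$ (e.g.\ $s=1$ and $G-X$ two copies of $K_{k-3}$ gives a path on $2k-5\geq k$ vertices in $G$ although $G-X$ has no $P_{k-2}$), so the correct condition on $G-X$ concerns linear forests with at most $s+1$ components, and ``iterating the same shift inside $G-X$'' must be shown to preserve $P_k$-freeness of $G$ itself, not merely a path bound in $G-X$. Moreover, an operation that only ``weakly increases'' $D_p$ cannot by itself pin down the maximizer; you need strict increase or a tie-breaking argument to conclude $G=W_{n,k-1,s}$. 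Your fallback route is also not a routine patch: the Kopylov/F\"uredi--Kostochka--Luo theorems maximize the number of edges among connected $P_k$-free graphs, and a $D_p$-maximizer for $p\geq 2$ need not be an edge-maximizer, so ``bridging from edge count to $D_p$ via convexity-stability'' is itself a missing argument of comparable difficulty. By contrast, the final optimization step is fine: $D_p(W_{n,k-1,s})=s(n-1)^p+(k-1-2s)(k-2-s)^p+(n-k+1+s)s^p$ is strictly convex in $s$ on $[1,t]$, so the maximum over integers is attained at $s=1$ or $s=t$; but that endpoint dichotomy is the easy part of the lemma.
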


\begin{lem}[\cite{ai2023graph}]\label{lem:condegpwpath}
    Let $k \geq 4$ and $t = \lfloor \frac{k}{2} \rfloor - 1$ and let $p \geq 2$. The following holds for all $n \geq 2k$. If $G$ is a connected $n$-vertex $P_k$-free graph with maximum degree power sum $D_p(G)$, then $G$ is $W_{n,k-1,t}$.
\end{lem}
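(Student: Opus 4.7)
The plan is to combine Lemma~\ref{lem:conpath} with an elementary comparison of the two candidate extremal graphs. By Lemma~\ref{lem:conpath}, for $n \ge k$ an extremal connected $P_k$-free graph with maximum $D_p$ is either $W_{n,k-1,1}$ or $W_{n,k-1,t}$, so it suffices to prove that $D_p(W_{n,k-1,t}) > D_p(W_{n,k-1,1})$ whenever $n \ge 2k$. The cases $k \in \{4,5\}$ are trivial, since there $t = \lfloor k/2 \rfloor - 1 = 1$ and the two candidates coincide; so we may assume $k \ge 6$ and hence $t \ge 2$.

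Unpacking the definition of $W_{n,k-1,s}$, the parts $X, Y, Z$ of sizes $s$, $k-1-2s$, $n-k+1+s$ carry vertices of degrees $n-1$, $k-2-s$, $s$ respectively, giving
\[
D_p(W_{n,k-1,s}) = s(n-1)^p + (k-1-2s)(k-2-s)^p + (n-k+1+s)\,s^p.
\]
Substituting $s=1$ yields $D_p(W_{n,k-1,1}) = (n-1)^p + (k-3)^{p+1} + (n-k+2)$, while $s=t$ splits cleanly by parity of $k$: if $k = 2t+2$ the middle term is $t^p$, and if $k = 2t+3$ it is $2(t+1)^p$. Taking the difference, one obtains
\[
D_p(W_{n,k-1,t}) - D_p(W_{n,k-1,1}) = (t-1)(n-1)^p + (n-k+1+t)\,t^p + \Phi(k,p) - (n-k+2),
\]
where $\Phi(k,p)$ depends only on $k$ and $p$ and satisfies $|\Phi(k,p)| \le (k-3)^{p+1}$.

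Since $t - 1 \ge 1$ and $p \ge 2$, the leading term $(t-1)(n-1)^p$ grows like $n^p$. Under the hypothesis $n \ge 2k$ it is at least $(2k-1)^p$, which already beats $(k-3)^{p+1}$ with room to spare once $k \ge 6$; the remaining positive term $(n-k+1+t)\,t^p$ easily absorbs the linear shortfall $n-k+2$ because $t^p \ge 2^p > 1$. This makes the difference strictly positive, completing the reduction.

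The main obstacle will be making the dominance step quantitatively sharp at the threshold $n = 2k$ for the smallest values of $k$. For large $k$ the multiplicative factor $t - 1 \asymp k/2$ supplies ample slack in addition to the basic growth of $(n-1)^p$, but at $k = 6$ (where $t - 1 = 1$) one must verify by hand that $(2k-1)^p > (k-3)^{p+1}$ for every $p \ge 2$, i.e.\ $11^p > 27$, which is immediate. Once this corner is dispatched, a short monotonicity argument in $k$ extends the estimate uniformly to all $k \ge 6$ and $p \ge 2$, yielding the lemma.
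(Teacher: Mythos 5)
First, note that the paper does not prove this statement at all: it is quoted from \cite{ai2023graph} and used as a black box, so your attempt to re-derive it from Lemma~\ref{lem:conpath} plus a direct comparison of the two candidates is a genuinely different route, and the skeleton is legitimate. Both $W_{n,k-1,1}$ and $W_{n,k-1,t}$ are connected $n$-vertex $P_k$-free graphs, so by Lemma~\ref{lem:conpath} it indeed suffices to show $D_p(W_{n,k-1,t})>D_p(W_{n,k-1,1})$ for $n\geq 2k$; your formula $D_p(W_{n,k-1,s})=s(n-1)^p+(k-1-2s)(k-2-s)^p+(n-k+1+s)s^p$ is correct, as is disposing of $k\in\{4,5\}$ where the two graphs coincide.

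However, the dominance step as you state it is false. You claim that $(t-1)(n-1)^p\geq(2k-1)^p$ ``already beats $(k-3)^{p+1}$ with room to spare once $k\geq 6$''. For fixed $p$ and large $k$ this fails badly: with $p=2$, $k=100$, $n=2k$, one has $(2k-1)^p=199^2=39601$ while $(k-3)^{p+1}=97^3=912673$. The comparison only works if you use the full multiplicative factor $t-1\asymp k/2$, not merely $t-1\geq 1$: for instance, from $2k-1>2(k-3)$ one gets $(t-1)(2k-1)^p>(t-1)2^p(k-3)^p$, and $(t-1)2^p\geq k-3$ holds for all even $k\geq 8$ and odd $k\geq 9$, leaving the boundary cases $k=6$ and $k=7$ (both with $t-1=1$) to be checked by hand; you check only $k=6$ and overlook $k=7$, where the needed inequality is $13^p>4^{p+1}$ (true, but it must be said). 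The second positive term $(n-k+1+t)t^p$ does absorb the $-(n-k+2)$ term as you say, but the concluding ``short monotonicity argument in $k$'' is asserted rather than given, and it is precisely the estimate that your erroneous step was supposed to supply. With the corrected use of the factor $t-1$ the plan does go through (the difference is also increasing in $n$, so checking $n=2k$ suffices), so this is a fixable quantitative gap rather than a wrong strategy --- but as written the proof is incomplete at its central inequality.
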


The following lemma is useful in the proof of the main theorem.

\begin{lem}\label{lemma:inequality}
Let $a \geq b >0$ and $p \geq 2$. Then $(a+b)^p \geq a^p + pa^{p-1}b + b^p$. In particular, $(a+b)^p \geq a^p + (p+1)b^p$.
\end{lem}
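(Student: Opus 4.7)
The plan is to prove the first inequality by a convexity/calculus argument, then deduce the ``in particular'' statement as an immediate consequence.

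For the main inequality, I would fix $a>0$ and $p\ge 2$, and consider the function
\[
f(x) = (a+x)^p - a^p - p\,a^{p-1}x - x^p
\]
on the interval $x \in [0,a]$. The goal is to show $f(x) \ge 0$ throughout this interval; specializing to $x=b$ (which lies in $[0,a]$ since $a\ge b>0$) yields the claim.

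The key computations are $f(0)=0$, $f'(x) = p(a+x)^{p-1} - p a^{p-1} - p x^{p-1}$ (so $f'(0)=0$), and
\[
f''(x) = p(p-1)\bigl[(a+x)^{p-2} - x^{p-2}\bigr].
\]
Since $p\ge 2$ we have $p-2 \ge 0$, and since $a+x \ge x \ge 0$ the bracket is nonnegative, hence $f''(x)\ge 0$ on $[0,a]$. Combined with $f'(0)=0$, this gives $f'(x)\ge 0$, and combined with $f(0)=0$, it gives $f(x)\ge 0$, as required.

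For the ``in particular'' part, I would simply note that since $a\ge b>0$ and $p-1\ge 1>0$, we have $a^{p-1}\ge b^{p-1}$, hence $p\,a^{p-1}b \ge p\,b^p$. Adding $a^p + b^p$ to both sides and applying the first inequality gives $(a+b)^p \ge a^p + p\,a^{p-1}b + b^p \ge a^p + (p+1)b^p$. There is no real obstacle here: the only mildly delicate point is checking that $p-2\ge 0$ is what makes the second derivative nonnegative, which is precisely where the hypothesis $p\ge 2$ is used.
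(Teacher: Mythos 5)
Your proof is correct, and it is a mild variant of the paper's argument rather than a copy of it. The paper first normalizes by $b$, reducing the claim to $(x+1)^p \ge x^p + p x^{p-1} + 1$ for $x = a/b \ge 1$, and proves that by checking the base value $f(1) = 2^p - p - 2 \ge 0$ and showing $f'(x)\ge 0$ via convexity of $x^{p-1}$; the hypothesis $a \ge b$ is thus used already for the first inequality. You instead fix $a$, set $f(x) = (a+x)^p - a^p - p a^{p-1}x - x^p$, and use $f(0)=f'(0)=0$ together with $f''(x) = p(p-1)\bigl[(a+x)^{p-2}-x^{p-2}\bigr] \ge 0$ (this is where $p \ge 2$ enters). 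This avoids both the normalization and the base-point check, and in fact shows the first inequality for all $a,b>0$; the hypothesis $a \ge b$ is only needed for the ``in particular'' step, where you correctly use $a^{p-1}b \ge b^p$, exactly as the paper does. Both routes are elementary calculus arguments of comparable length; yours is marginally cleaner in that the only inequality invoked is monotonicity of $t \mapsto t^{p-2}$, while the paper's additionally needs $2^p \ge p+2$.
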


\begin{proof}
We first show that $(x+1)^p \geq x^p + px^{p-1} + 1$ when $x \geq 1$. Let $f(x) = (x+1)^p - x^p - px^{p-1} - 1$ and so $f(1) = 2^p - p - 2 \geq 0$. Taking derivative of $f(x)$, we get $f'(x) = p\left((x+1)^{p-1} - x^{p-1} - (p-1)x^{p-2}\right)$. Note that $f'(x) \geq 0$ due to convexity of $x^{p-1}$. Hence, $f(x) \geq 0$ when $x \geq 1$ as desired. 
Then
    \[
        (a+b)^p = b^p \left(\frac{a}{b} + 1\right)^p \geq b^p \left(\left(\frac{a}{b}\right)^p + p\left(\frac{a}{b}\right)^{p-1} + 1\right) = a^p + pa^{p-1}b + b^p \geq a^p + (p+1)b^p.
    \]
    This completes the proof.
\end{proof}

\begin{proof}[Proof of Theorem~\ref{thm:mainthm}]
Let $G$ be an $n$-vertex $P_k$-free graph which attains the maximum degree power sum $D_p(G)$ where $p\geq 2$. Let $t=\lfloor \frac{k}{2} \rfloor - 1$ and let $G_1,G_2,\ldots,G_{\ell}$ be all components of $G$. Let $n_i = |G_i|$ for $i \in [\ell]$. By {Lemma~\ref{lem:conpath}}, we can immediately get the following claim.

\setcounter{claim}{0}
\begin{claim}\label{claim:extremal}
For any component $G_i$, where $1\leq i\leq \ell$, if $n_i \leq k-1$, then $G_i$ is a clique; if $n_i \geq k$, then $G_i \in \{W_{n_i,k-1,1}, W_{n_i,k-1,t}\}$. 
\end{claim}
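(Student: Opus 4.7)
The plan is a component-swap argument that exploits two simple facts: the degree power sum $D_p$ is additive across connected components, and $P_k$-freeness is a \emph{per-component} property, since any path in a graph lies inside a single component.

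First I would fix an arbitrary component $G_i$ of $G$ and argue that $G_i$ must already maximize $D_p$ among all connected $P_k$-free graphs on $n_i$ vertices. Indeed, suppose toward a contradiction that there is a connected $P_k$-free graph $H_i$ on the vertex set $V(G_i)$ with $D_p(H_i) > D_p(G_i)$. Form $G' = (G - E(G_i)) \cup E(H_i)$, obtained from $G$ by deleting the edges of $G_i$ and replacing them by the edges of $H_i$. The connected components of $G'$ are exactly $H_i, G_1,\ldots,G_{i-1},G_{i+1},\ldots,G_\ell$, each of which is $P_k$-free, so $G'$ is $P_k$-free; and because $D_p$ splits as a sum over components, $D_p(G') = D_p(G) - D_p(G_i) + D_p(H_i) > D_p(G)$, contradicting the extremal choice of $G$.

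With this reduction in hand, the two cases of the claim follow directly. If $n_i \geq k$, then Lemma~\ref{lem:conpath} applies to $G_i$ and immediately gives $G_i \in \{W_{n_i,k-1,1}, W_{n_i,k-1,t}\}$. If $n_i \leq k-1$, then every graph on $n_i$ vertices is trivially $P_k$-free, and adding any missing edge strictly increases $D_p$ (each endpoint's degree rises by one and $x \mapsto x^p$ is strictly increasing on the nonnegative reals); hence the unique connected maximizer is $K_{n_i}$, so $G_i = K_{n_i}$.

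There is no serious obstacle here: once one notes that the swap preserves $P_k$-freeness and that $D_p$ decomposes over components, the claim is essentially an immediate corollary of Lemma~\ref{lem:conpath}. The only mild subtlety is verifying that the replacement graph $H_i$ can be assumed connected without loss; this is automatic because, by the same swap reasoning applied inductively, any disconnected $P_k$-free candidate would itself admit improvement by upgrading each of its components to the connected extremal graph of the appropriate size.
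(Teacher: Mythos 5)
Your proposal is correct and follows essentially the same route as the paper: component-wise maximality of $D_p$ (via the swap argument that the paper leaves implicit), then Lemma~\ref{lem:conpath} for $n_i \geq k$ and the trivial observation that $K_{n_i}$ is the unique maximizer when $n_i \leq k-1$. The only difference is that you spell out the replacement step explicitly, which the paper takes for granted.
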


\begin{proof}
    When $n_i \leq k-1$, then any $n_i$-vertex graph is $P_k$-free. Hence, $K_{n_i}$ would attain the largest degree power sum among all $n_i$-vertex graphs. When $n_i \geq k$, by Lemma~\ref{lem:conpath}, we have $G_i \in \{W_{n_i,k-1,1}, W_{n_i,k-1,t}\}$. 
\end{proof}

In the rest of this proof, we say that a component $G_i$ is a \emph{Type-A} component if $G_i = W_{n_i, k-1,1}$, and is a \emph{Type-B} component if $G_i = W_{n_i, k-1,t}$ where $t = \lfloor \frac{k}{2} \rfloor - 1$.

\begin{claim}\label{claim:twosize}
For any two components $G_1$
and $G_2$ of order $n_1\leq n_2\leq k-1$, we have $n_1+n_2\geq k$.
\end{claim}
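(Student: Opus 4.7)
The plan is to argue by contradiction: suppose $n_1 + n_2 \leq k - 1$. Since $n_1 \leq n_2 \leq k-1$, Claim~\ref{claim:extremal} immediately gives $G_1 = K_{n_1}$ and $G_2 = K_{n_2}$, as both components have at most $k-1$ vertices. The goal is then to exhibit a local modification on $G$ that produces another $P_k$-free graph on $n$ vertices with strictly larger degree power sum, contradicting the extremality of $G$.

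The natural modification is to merge the two cliques: form $G'$ from $G$ by replacing the components $G_1\cup G_2$ with a single clique on the combined vertex set $V(G_1)\cup V(G_2)$, leaving $G_3,\ldots,G_\ell$ untouched. The new component is $K_{n_1+n_2}$, which by hypothesis has at most $k-1$ vertices and therefore contains no $P_k$; the remaining components are unchanged and so remain $P_k$-free. Hence $G'$ is a $P_k$-free graph on the same $n$ vertices as $G$.

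It remains to check that $D_p(G')>D_p(G)$. The contributions from $G_3,\ldots,G_\ell$ are identical in both graphs, so the comparison reduces to verifying
\[
(n_1+n_2)(n_1+n_2-1)^p > n_1(n_1-1)^p + n_2(n_2-1)^p.
\]
I would split the left-hand side as $n_1(n_1+n_2-1)^p + n_2(n_1+n_2-1)^p$ and then use $n_1+n_2-1>n_1-1$ (since $n_2\geq 1$) and $n_1+n_2-1>n_2-1$ (since $n_1\geq 1$) to obtain strict term-by-term dominance, adopting the standard convention $0^p=0$ to cover the boundary case where $n_i=1$.

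No step is genuinely difficult here; the main conceptual point is simply to notice that the hypothesis $n_1+n_2\leq k-1$ is precisely what permits merging two $P_k$-free cliques into a single $P_k$-free clique, after which strict monotonicity of $x^p$ handles the degree power sum comparison automatically.
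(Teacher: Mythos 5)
Your proposal is correct and follows essentially the same route as the paper: merge the two cliques $K_{n_1}$ and $K_{n_2}$ into $K_{n_1+n_2}$, note it stays $P_k$-free because $n_1+n_2\leq k-1$, and show the degree power sum strictly increases. The only (cosmetic) difference is the final numerical step, where you use term-by-term monotonicity of $x^p$ after splitting $(n_1+n_2)(n_1+n_2-1)^p$, while the paper bounds $n_1(n_1-1)^p+n_2(n_2-1)^p\leq(n_1+n_2-1)\bigl((n_1-1)^p+(n_2-1)^p\bigr)$ and invokes $x^p+y^p<(x+y+1)^p$; both verifications are valid.
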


\begin{proof}
Suppose that $n_1+n_2\leq k-1$.
Then $G_i$ must be a clique for $i=1,2$. Note that $D_p(G_1\cup G_2)=n_1(n_1-1)^p+n_2(n_2-1)^p$. If we replace $G_1\cup G_2$ with $K_{n_1 + n_2}$ and denote the new graph by $G'$. Then $G'$ is still $P_k$-free since $n_1 + n_2 \leq k-1$. And $D_p(K_{n_1 + n_2})=(n_1+n_2)(n_1+n_2-1)^p$.
Note that $D_p(G_1\cup G_2)=n_1(n_1-1)^p+n_2(n_2-1)^p \leq (n_1+n_2-1)((n_1-1)^p+(n_2-1)^p)<D_p(K_{n_1 + n_2})$, as there holds the inequality $x^p+y^p < (x+y+1)^p$ where $x\geq 0,y\geq 0$ and $p\geq 2$. Now, we proved $G'$ has a larger degree power sum than $G$, a contradiction. This proves the claim.
\end{proof}

\begin{claim}\label{claim:k-2}
    There exists at most one component of order at most $k-2$.
\end{claim}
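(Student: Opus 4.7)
The plan is to argue by contradiction. Suppose two distinct components $G_1, G_2$ both have order at most $k-2$, with $n_1 \le n_2 \le k-2$. By Claim~\ref{claim:extremal} both are cliques, and applying Claim~\ref{claim:twosize} to the pair yields $n_1 + n_2 \ge k$. I will obtain a contradiction by replacing $G_1 \cup G_2 = K_{n_1} \cup K_{n_2}$ with $K_{k-1} \cup K_{n_1+n_2-k+1}$ while leaving every other component untouched, and showing that the resulting graph $G'$ is still $P_k$-free but has strictly larger degree power sum. The $P_k$-freeness is immediate: since $n_1 + n_2 \le 2(k-2)$, the smaller new clique has order $n_1+n_2-k+1 \le k-3$, so both new components are cliques on fewer than $k$ vertices.

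The crux is then the strict inequality
\[
(k-1)(k-2)^p + (n_1+n_2-k+1)(n_1+n_2-k)^p > n_1(n_1-1)^p + n_2(n_2-1)^p.
\]
Setting $f(m) := m(m-1)^p = D_p(K_m)$, I plan to establish this by an iterative ``shift one vertex'' argument based on strict convexity of $f$ on $[1,\infty)$. The key input is that $f(m+1) - f(m)$ is strictly increasing in $m$ on nonnegative integers, which follows from a short second-derivative computation (or, equivalently, from Lemma~\ref{lemma:inequality}). Consequently, moving a single vertex from a clique of size $m_1$ to a clique of size $m_2 \ge m_1$ replaces $f(m_1) + f(m_2)$ by $f(m_1-1) + f(m_2+1)$ and strictly increases the sum. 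Starting from $(n_1, n_2)$ and iterating $k-1-n_2$ such moves reaches the configuration $(n_1+n_2-k+1, k-1)$ as desired.

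The main obstacle is simply the bookkeeping in the iterative step: one must check that at every intermediate moment the two cliques have sizes in $[1, k-1]$, so that $P_k$-freeness is preserved and the strict-convexity bound genuinely applies. This follows from the two structural inputs already in hand: $n_2 \le k-2$ ensures that the larger clique never exceeds $k-1$ vertices along the way, while $n_1 + n_2 \ge k$ ensures that the smaller clique always retains at least one vertex. With these checks in place, each of the $k-1-n_2 \ge 1$ vertex shifts strictly increases $D_p$, so $D_p(G') > D_p(G)$, contradicting the maximality of $D_p(G)$ and completing the proof.
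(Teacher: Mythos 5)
Your proposal is correct and follows essentially the same route as the paper: the same replacement of $K_{n_1}\cup K_{n_2}$ by $K_{k-1}\cup K_{n_1+n_2-k+1}$, with Claim~\ref{claim:twosize} guaranteeing $n_1+n_2\geq k$ and $P_k$-freeness following from $n_1+n_2-k+1\leq k-3$. The only difference is that you spell out the paper's one-line ``convexity argument'' as an explicit vertex-shifting argument using the strictly increasing differences of $m\mapsto m(m-1)^p$, which is a valid elaboration rather than a different approach.
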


\begin{proof}
    Suppose to the contrary that there exist two components, say $G_1$ and $G_2$, of order $n_1 \leq k-2$ and $n_2 \leq k-2$, respectively. By Claim~\ref{claim:twosize}, $n_1 + n_2 \geq k$. Let $n'_1 = k-1$ and $n'_2 = n_1 + n_2 - k+1$. Note that $n'_2 \leq k-3$, then $K_{n'_1} \cup K_{n'_2}$ contains no $P_k$. Recall that $D_p(K_r) = r(r-1)^p$. Then $D_p(K_{n'_1} \cup K_{n'_2}) > D_p(G_1 \cup G_2)$ by convexity argument. This means that if we replace $G_1 \cup G_2$ with $K_{n'_1} \cup K_{n'_2}$, then we get an $n$-vertex $P_k$-free graph with a larger degree power sum, a contradiction.
\end{proof}

\begin{claim}\label{claim:k-1}
    There exist at most five components of order $k-1$. 
\end{claim}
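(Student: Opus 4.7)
My plan is to argue by contradiction. Suppose $G$ has at least six components of order $k-1$; by Claim~\ref{claim:extremal}, each such component is the clique $K_{k-1}$, contributing $(k-1)(k-2)^p$ to $D_p(G)$. I will replace any six of these $K_{k-1}$'s by a single copy of $W_{6(k-1), k-1, t}$ on their combined $6(k-1)$ vertices. Since $W_{n, k-1, t}$ is $P_k$-free for every $n$, the modified graph $G'$ remains an $n$-vertex $P_k$-free graph, so exhibiting $D_p(G') > D_p(G)$ will contradict the maximality of $D_p(G)$.

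Reading off the degrees of $W_{6(k-1), k-1, t}$ from its definition --- namely $t$ vertices of degree $6k-7$, $k-1-2t$ vertices of degree $k-2-t$, and $5k-5+t$ vertices of degree $t$ --- the quantity to estimate is
\[
\Delta := D_p(G') - D_p(G) = t(6k-7)^p + (k-1-2t)(k-2-t)^p + (5k-5+t)t^p - 6(k-1)(k-2)^p.
\]
Dropping the nonnegative middle term and using $(6k-7)^p \geq 6^p(k-2)^p \geq 36(k-2)^p$ (the latter since $p \geq 2$), one obtains
\[
\Delta \geq 36 t(k-2)^p + (5k-5+t)t^p - 6(k-1)(k-2)^p = 6(6t - k + 1)(k-2)^p + (5k-5+t)t^p.
\]

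It remains to verify $6t - k + 1 \geq 0$ for $k \geq 4$ with $t = \lfloor k/2 \rfloor - 1$. A parity split gives $6t - k + 1 = 2k - 5$ for even $k$ and $6t - k + 1 = 2k - 8$ for odd $k$, both nonnegative in the relevant range. Since $(5k-5+t)t^p > 0$ as $t \geq 1$, this forces $\Delta > 0$, the required contradiction.

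The only potential difficulty is the bookkeeping in computing $\Delta$ together with the elementary linear inequality $6t - k + 1 \geq 0$, both of which are routine. I expect the bound \textquotedblleft at most five\textquotedblright\ is not tight (indeed, weaker transformations such as combining three $K_{k-1}$'s into $W_{3(k-1),k-1,t}$ often already suffice), but five is sufficient for the subsequent absorption argument outlined in the introduction.
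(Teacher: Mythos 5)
Your proof is correct and takes essentially the same route as the paper's: assume six components of order $k-1$ (each a clique by Claim~\ref{claim:extremal}), replace them by $W_{6k-6,k-1,t}$, and show the degree power sum strictly increases by dropping nonnegative terms and using $6^p \geq 36$ for $p \geq 2$. The only cosmetic difference is which terms are discarded --- you keep the contribution of the degree-$t$ vertices and reduce the check to $6t-k+1\geq 0$, while the paper keeps only the degree-$(6k-7)$ vertices and checks $12k-48\geq 0$ --- so the two arguments are interchangeable.
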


\begin{proof}
    Suppose to the contrary that there exist six components, say $G_i, i=[6]$ of order $n_i = k-1, i \in [6]$. Then 
    \begin{align*}
        D_p(W_{6k-6, k-1, t}) - D_p(\bigcup_{i=1}^6 G_i) =& \left(\lfloor \frac{k}{2} \rfloor - 1\right)(6k-7)^p + (6k-\lceil \frac{k}{2} \rceil -6)\left(\lfloor \frac{k}{2} \rfloor - 1\right)^p \\
        &+ \left(1+k - 2\lfloor \frac{k}{2} \rfloor\right)\left(\lceil \frac{k}{2} \rceil - 1\right)^p - 6(k-1)(k-2)^p \\
        >& 6^p \left(\lfloor \frac{k}{2} \rfloor - 1\right)\left(k-\frac{7}{6}\right)^p - 6(k-1)(k-2)^p \\
        \geq& 6^2 \left(\frac{k}{2} - \frac{3}{2}\right)(k-2)^p - 6(k-1)(k-2)^p \\
        =& (12k - 48)(k-2)^p \geq 0.
    \end{align*}
    This means that we can replace $\bigcup_{i=1}^6 G_i$ with $W_{6k-6, k-1, t}$ to get an $n$-vertex $P_k$-free graph with a larger degree power sum, a contradiction.
\end{proof}

\begin{claim}\label{claim:type-A}
    There exists at most one Type-A component.
\end{claim}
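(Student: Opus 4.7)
I would argue by contradiction via a single local modification. Suppose two Type-A components $G_1, G_2$ coexist and, after relabeling, $n_1 \leq n_2$. By Claim~\ref{claim:extremal}, being of Type-A forces $n_i \geq k$, so each $G_i = W_{n_i, k-1, 1}$ has a unique dominating vertex $x_i$ of degree $n_i - 1$ together with at least $n_i - k + 2 \geq 2$ pendant vertices in its leaf set $Z_i$.

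The operation I would apply is a \emph{leaf transfer}: pick any $z \in Z_1$, delete the edge $zx_1$, and add the edge $zx_2$. In the resulting graph $G'$, the component containing $x_1$ becomes $W_{n_1-1, k-1, 1}$ and the component containing $x_2$ becomes $W_{n_2+1, k-1, 1}$; both are $W$-graphs and hence $P_k$-free, while every other component is untouched. So $G'$ is still a $P_k$-free graph on $n$ vertices and is a legitimate competitor for $G$. Since only the degrees of $x_1$ and $x_2$ change (from $n_1 - 1$ to $n_1 - 2$ and from $n_2 - 1$ to $n_2$, respectively), one obtains
$$D_p(G') - D_p(G) = \bigl[n_2^p - (n_2-1)^p\bigr] - \bigl[(n_1-1)^p - (n_1-2)^p\bigr].$$
Because $n_1 - 2 < n_1 - 1 \leq n_2 - 1$, the strict convexity of $x \mapsto x^p$ for $p \geq 2$ makes the first bracket strictly exceed the second, so $D_p(G') > D_p(G)$, contradicting the maximality of $D_p(G)$.

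I do not anticipate any substantive obstacle. The two small items to verify are that $G_1$ actually has a leaf to move (immediate from $n_1 \geq k$) and that the rewiring preserves $P_k$-freeness (immediate, since both affected components remain of the $W$-form). If one prefers a citable inequality to strict convexity, the same conclusion can be extracted from Lemma~\ref{lemma:inequality} applied to the two brackets above.
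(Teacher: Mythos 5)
Your argument is correct and is in the same spirit as the paper's: both are replacement arguments that redistribute leaves between the two Type-A components and win by convexity of $x\mapsto x^p$. The paper does the redistribution all at once, replacing $G_1\cup G_2$ by $W_{k-1,k-1,1}\cup W_{n_1+n_2-k+1,k-1,1}$ and comparing via the explicit formula for $D_p(W_{n,k-1,1})$ (it also invokes Lemma~\ref{lem:condegpwpath} to bound $n_i\leq 2k-1$, though this is not needed for the comparison), whereas your single leaf transfer changes only the degrees of $x_1$ and $x_2$, so you need nothing beyond the strict monotonicity of the first differences $x^p-(x-1)^p$, and no size bounds or closed-form degree power sums at all. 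The two small verifications you flag (existence of a leaf, since $n_1\geq k$ gives $|Z_1|=n_1-k+2\geq 2$, and $P_k$-freeness of the rewired components, since both stay of the form $W_{m,k-1,1}$) are indeed all that is needed, so your local variant is a complete, and slightly more economical, proof of the claim.
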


\begin{proof}
    Suppose to the contrary that there exist two Type-A components, say $G_1$ and $G_2$, of order $n_1$ and $n_2$, respectively. Then by Claim~\ref{claim:extremal} and Lemma~\ref{lem:condegpwpath}, $k \leq n_1 \leq 2k-1$ and $k \leq n_2 \leq 2k-1$. 
    Let $n'_1 = k-1$ and $n'_2 = n_1 + n_2 - k+1$, let $G'_1 = W_{n'_1,k-1,1}$ and $G'_2 = W_{n'_2, k-1, 1}$. Recall that $D_p(W_{n,k-1,t}) = (n-k+2) + (k-3)^p + (n-1)^p$ holds for $n \geq k-2$. Then $D_p(G'_1 \cup G'_2) > D_p(G_1 \cup G_2)$ by convexity argument. Now we can replace $G_1 \cup G_2$ with $G'_1 \cup G'_2$ to get an $n$-vertex $P_k$-free graph with a larger degree power sum, a contradiction.
\end{proof}

\begin{claim}\label{claim:type-B}
    There exists at most one Type-B component.
\end{claim}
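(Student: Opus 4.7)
My plan is to mirror the merging strategy of Claim~\ref{claim:type-A}. Suppose for contradiction that $G$ has two Type-B components $G_1=W_{n_1,k-1,t}$ and $G_2=W_{n_2,k-1,t}$; by Claim~\ref{claim:extremal} both $n_1,n_2\geq k$. I would form $G'$ from $G$ by replacing $G_1\cup G_2$ with the single component $W_{n_1+n_2,k-1,t}$, which is $P_k$-free by definition, and prove $D_p(G')>D_p(G)$, contradicting the maximality of $D_p(G)$.

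Using the explicit degree partition of $W_{n,k-1,t}$ (namely $t$ vertices of degree $n-1$, $k-1-2t$ vertices of degree $\lceil k/2\rceil-1$, and $n-\lceil k/2\rceil$ vertices of degree $t$), a direct calculation gives
\[
D_p(G')-D_p(G)=t\bigl[(n_1+n_2-1)^p-(n_1-1)^p-(n_2-1)^p\bigr]-(k-1-2t)\bigl(\lceil k/2\rceil-1\bigr)^p+\lceil k/2\rceil\,t^p.
\]
The key step is to show that this is positive. I would assume without loss of generality $n_1\leq n_2$ and apply Lemma~\ref{lemma:inequality} to the first bracket with $(a,b)=(n_2-1,n_1)$ when $n_2>n_1$ and $(a,b)=(n_1,n_1-1)$ when $n_1=n_2$. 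In either case $a\geq b\geq k-1$ and $a+b=n_1+n_2-1$, so the lemma yields a lower bound on the bracket of $p\cdot k(k-1)^{p-1}\geq p(k-1)^p$; hence the main term is at least $tp(k-1)^p$.

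For the loss term, since $k-1-2t\in\{1,2\}$ and $\lceil k/2\rceil-1\leq (k-1)/2$, it is at most $(k-1)^p/2^{p-1}$. Combining $t\geq 1$ and $p\geq 2$, the inequality $tp\cdot 2^{p-1}\geq 4>1$ shows that the main term strictly dominates the loss, so $D_p(G')>D_p(G)$, giving the desired contradiction.

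The main (and essentially only) obstacle is the case split $n_1<n_2$ versus $n_1=n_2$ when invoking Lemma~\ref{lemma:inequality}, because the lemma requires $a\geq b$; once that split is handled uniformly, the dominance of the main term over the loss is elementary and the rest is automatic.
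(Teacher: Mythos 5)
Your proposal is correct and follows essentially the same route as the paper: merge the two Type-B components into $W_{n_1+n_2,k-1,t}$, compute the same difference formula, bound the bracket $(n_1+n_2-1)^p-(n_1-1)^p-(n_2-1)^p$ via Lemma~\ref{lemma:inequality}, and check that the gain dominates the $(k-1-2t)\left(\lceil k/2\rceil-1\right)^p$ loss. The only cosmetic difference is how the lemma is invoked (the paper first lowers $(n_1+n_2-1)^p$ to $(n_1+n_2-2)^p$ so it can take $a=n_1-1$, $b=n_2-1$ without your case split), and both bookkeeping arguments go through.
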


\begin{proof}
    Suppose to the contrary that there exist two Type-B components, say $G_1$ and $G_2$, of order $n_1$ and $n_2$, respectively. W.O.L.G., suppose $n_1 \geq n_2$. By Claim~\ref{claim:extremal}, $n_1 \geq k$ and $n_2 \geq k$. Then $D_p(G_i) = (\lfloor \frac{k}{2} \rfloor - 1)(n_i - 1)^p + (n_i - \lceil \frac{k}{2} \rceil)(\lfloor \frac{k}{2} \rfloor - 1)^p + (1+k - 2\lfloor \frac{k}{2} \rfloor)(\lceil \frac{k}{2} \rceil - 1)^p, i = 1,2$. Let $G_0 = W_{n_1 + n_2, k-1, t}$. Note that
    \begin{align}
        D_p(G_0) - D_p(G_1 \cup G_2) =&\left (\lfloor \frac{k}{2}\rfloor - 1 \right) \left((n_1 + n_2 - 1)^p - (n_1 - 1)^p - (n_2 - 1)^p\right) \notag \\
        &+ \lceil \frac{k}{2} \rceil \left(\lfloor \frac{k}{2} \rfloor - 1\right) ^p - \left(1+k - 2\lfloor \frac{k}{2} \rfloor\right)\left(\lceil \frac{k}{2} \rceil - 1\right)^p \notag \\
        >& \left(\lfloor \frac{k}{2} \rfloor - 1\right ) \left((n_1 + n_2 - 2)^p - (n_1 - 1)^p - (n_2 - 1)^p\right) \notag \\
        &+ \lceil \frac{k}{2} \rceil \left(\lfloor \frac{k}{2} \rfloor - 1\right)^p - \left(1+k - 2\lfloor \frac{k}{2} \rfloor\right)\left(\lceil \frac{k}{2} \rceil - 1\right)^p \notag \\
        \geq& \left(\lfloor \frac{k}{2} \rfloor - 1\right) p(n_2 - 1)^p +  \lceil \frac{k}{2} \rceil \left(\lfloor \frac{k}{2} \rfloor - 1\right)^p \notag \\ 
        &- \left(1+k - 2\lfloor \frac{k}{2} \rfloor\right)\left(\lceil \frac{k}{2} \rceil - 1\right)^p \label{inq:1} \\
        \geq& 2(k-1)^p -2\left(\lceil \frac{k}{2} \rceil - 1\right)^p + \lceil \frac{k}{2} \rceil \left(\lfloor \frac{k}{2} \rfloor - 1\right)^p \notag \\ 
        >& 0. \notag
    \end{align}
    The inequality (\ref{inq:1}) holds due to Lemma~\ref{lemma:inequality} by letting $a = n_1 -1$ and $b = n_2 - 1$. Then we can replace $G_1 \cup G_2$ with $G_0$ to get an $n$-vertex $P_k$-free graph with a larger degree power sum, a contradiction.
\end{proof}

\begin{claim}\label{claim:BabsorbA}
    If $n_i \geq 2k, n_j \leq k-1$ and $G_i$ is of Type-B, then we can replace $G_i \cup G_j$ with $W_{n_i + n_j, k-1, t}$ to get a larger degree power sum.
\end{claim}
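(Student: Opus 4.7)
The plan is to compare $D_p$ term-by-term using the explicit degree distribution of $W_{n, k-1, t}$, and then reduce the resulting inequality to an application of Lemma~\ref{lemma:inequality}. Write $N := n_i + n_j$ and set $G_0 := W_{N, k-1, t}$. Since $G_0$ is $P_k$-free, replacing $G_i \cup G_j$ by $G_0$ in $G$ keeps the graph $P_k$-free, so it suffices to prove $D_p(G_0) > D_p(G_i) + D_p(G_j)$.

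Since $n_j \leq k-1$, Claim~\ref{claim:extremal} forces $G_j = K_{n_j}$, so $D_p(G_j) = n_j(n_j-1)^p$. Recall from the proof of Claim~\ref{claim:type-B} that $W_{n,k-1,t}$ has $t$ vertices of degree $n-1$, $(n-\lceil k/2 \rceil)$ vertices of degree $t$, and $(1+k-2\lfloor k/2 \rfloor)$ vertices of degree $\lceil k/2 \rceil - 1$. Only the first and second types change when passing from $G_i$ to $G_0$, so
\[
D_p(G_0) - D_p(G_i \cup G_j) = t\bigl[(N-1)^p - (n_i-1)^p\bigr] + n_j t^p - n_j(n_j-1)^p.
\]
Applying Lemma~\ref{lemma:inequality} with $a = n_i - 1$ and $b = n_j$ (valid because $n_i - 1 \geq 2k - 1 \geq n_j \geq 1$) yields $(N-1)^p \geq (n_i-1)^p + p(n_i-1)^{p-1}n_j + n_j^p$. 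Substituting and factoring out $n_j$, it suffices to establish
\[
tp(n_i-1)^{p-1} + t n_j^{p-1} + t^p > (n_j-1)^p.
\]
I would simply drop the two nonnegative middle terms and prove the stronger bound $tp(n_i-1)^{p-1} > (n_j-1)^p$.

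The only delicate point is verifying this final inequality uniformly for all $k \geq 4$ and $p \geq 2$. Using $t \geq (k-3)/2$ gives $tp \geq k-3$, and together with $n_i - 1 \geq 2k-1$ and $n_j - 1 \leq k-2$ it reduces to $(k-3)(2k-1)^{p-1} > (k-2)^p$, or equivalently
\[
\frac{k-3}{k-2}\left(\frac{2k-1}{k-2}\right)^{p-1} > 1.
\]
Since $(2k-1)/(k-2) > 2$ for $k \geq 4$, the left-hand side is increasing in $p$, so it is enough to check $p=2$, which rearranges to $k^2 - 3k - 1 > 0$ and is clear for $k \geq 4$. The main obstacle is this balancing act when $t$ is as small as $1$ (at $k=4,5$); the factor $p$ in Lemma~\ref{lemma:inequality} supplies exactly the slack needed so that no case split on $k$ is required.
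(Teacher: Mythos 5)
Your proposal is correct and follows essentially the same route as the paper's proof: the same expression for $D_p(W_{n_i+n_j,k-1,t}) - D_p(G_i \cup G_j)$, the same application of Lemma~\ref{lemma:inequality} with $a=n_i-1$, $b=n_j$, and then crude bounds using $n_i-1\geq 2k-1$, $n_j-1\leq k-2$, $t\geq (k-3)/2$. The only difference is cosmetic: the paper keeps the $n_j t^p$ term and compares $(2k-6)(k-\tfrac12)^{p-1}$ with $(k-2)^p$, while you drop that term and verify $(k-3)(2k-1)^{p-1} > (k-2)^p$ directly; both closings are valid.
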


\begin{proof}
    Let $G_0 = W_{n_i + n_j, k-1, t}$. Then
    \begin{align}
        D_p(G_0) - D_p(G_i \cup G_j) =& \left(\lfloor \frac{k}{2} \rfloor - 1\right)\left((n_i + n_j - 1)^p - (n_i - 1)^p\right) \notag \\ 
        &+ n_j\left(\lfloor \frac{k}{2} \rfloor - 1\right)^p - n_j(n_j - 1)^p \notag \\
        \geq& \left(\lfloor \frac{k}{2} \rfloor - 1\right) p(n_i - 1)^{p-1}n_j + n_j\left(\lfloor \frac{k}{2} \rfloor - 1\right)^p - n_j(n_j - 1)^p \label{inq:2} \\
        \geq& p\left(\frac{k}{2} - \frac{3}{2}\right)n_j (2k - 1)^{p-1} - n_j(n_j - 1)^p + n_j\left(\lfloor \frac{k}{2} \rfloor - 1\right)^p \notag \\
        \geq& n_j (2k-6) \left(k - \frac{1}{2}\right)^{p-1}-n_j(k-2)^p+n_j\left(\lfloor \frac{k}{2} \rfloor - 1\right)^p \notag \\
        >& 0. \notag
    \end{align}
    The inequality (\ref{inq:2}) holds due to Lemma~\ref{lemma:inequality} by letting $a = n_i -1$ and $b = n_j$. This completes the proof.
\end{proof}

\begin{claim}\label{claim:Babsorbclique}
    If $n_i \geq 2k$, $G_i$ is of Type-B, and $G_j$ is of Type-A, then we can replace $G_i \cup G_j$ with $W_{n_i + n_j, k-1, t}$ to get a larger degree power sum.
\end{claim}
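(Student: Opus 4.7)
The plan is to imitate the strategy of Claim~\ref{claim:BabsorbA}. Set $G_0 := W_{n_i + n_j, k-1, t}$, which is $P_k$-free, and aim to show $D_p(G_0) > D_p(G_i) + D_p(G_j)$; then replacing $G_i \cup G_j$ by $G_0$ strictly increases the degree power sum, contradicting maximality. Write $A = \lfloor k/2 \rfloor - 1$. Passing from $G_i = W_{n_i, k-1, t}$ to $G_0$ leaves the $1+k-2\lfloor k/2 \rfloor$ ``middle'' vertices of degree $\lceil k/2 \rceil - 1$ untouched, lifts the degrees of the $A$ apex vertices from $n_i - 1$ to $n_i + n_j - 1$, and appends $n_j$ new vertices of degree $A$, so the middle-vertex contributions cancel and
\[
D_p(G_0) - D_p(G_i) = A\bigl[(n_i + n_j - 1)^p - (n_i - 1)^p\bigr] + n_j A^p.
\]

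To process the first bracketed term I would invoke that a Type-A component satisfies $k \le n_j \le 2k-1$ (by Claim~\ref{claim:extremal} together with Lemma~\ref{lem:condegpwpath}), so that $n_i - 1 \ge 2k - 1 \ge n_j$ and Lemma~\ref{lemma:inequality} applies with $a = n_i - 1$, $b = n_j$, yielding
\[
D_p(G_0) - D_p(G_i) \;\ge\; A p (n_i - 1)^{p-1} n_j \;+\; A n_j^p \;+\; n_j A^p.
\]
On the other side, a direct degree count in $G_j = W_{n_j, k-1, 1}$ (one apex of degree $n_j-1$, $k-3$ vertices of degree $k-3$, and $n_j - k + 2$ pendants of degree $1$) gives
\[
D_p(G_j) = (n_j - 1)^p + (k-3)^{p+1} + (n_j - k + 2).
\]

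The remaining step is to verify
\[
A p (n_i - 1)^{p-1} n_j + A n_j^p + n_j A^p \;>\; (n_j - 1)^p + (k-3)^{p+1} + (n_j - k + 2),
\]
and I expect this bookkeeping to be the only obstacle, though a mild one. The dominant gain $A p (n_i - 1)^{p-1} n_j$ alone is at least $k(k-3)(2k-1)^{p-1}$, using $A \ge (k-3)/2$, $p \ge 2$, $n_j \ge k$, and $n_i - 1 \ge 2k - 1$, whereas every summand on the right is $O((2k)^p)$, so the inequality follows from a term-by-term comparison for $k \ge 5$; for instance, $A n_j^p$ absorbs $(n_j - 1)^p$ since $A \ge 1$, and the leading term dwarfs the $k$-polynomial residue $(k-3)^{p+1} + (n_j - k + 2)$. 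The edge case $k = 4$ does not require separate treatment, since there Type-A and Type-B coincide and the hypothesis is already excluded by Claim~\ref{claim:type-B}.
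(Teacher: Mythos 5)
Your proposal is correct and follows essentially the same route as the paper: the same replacement graph $W_{n_i+n_j,k-1,t}$, the same bound $k\le n_j\le 2k-1$ from Claim~\ref{claim:extremal} and Lemma~\ref{lem:condegpwpath}, and the same application of Lemma~\ref{lemma:inequality} with $a=n_i-1$, $b=n_j$, differing only in minor bookkeeping (you retain the $\left(\lfloor k/2\rfloor-1\right)n_j^p$ term to absorb $(n_j-1)^p$, while the paper instead splits the main term $p\left(\lfloor k/2\rfloor-1\right)(n_i-1)^{p-1}n_j$ into two halves). The final numerical comparison you sketch does check out, so no gap.
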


\begin{proof}
    Let $G_0 = W_{n_i + n_j, k-1, t}$. By Lemma~\ref{lem:condegpwpath} and Claim~\ref{claim:extremal}, we have $k \leq n_j \leq 2k-1$. Then
    \begin{align}
        D_p(G_0) - D_p(G_1 \cup G_2) =& \left(\lfloor \frac{k}{2} \rfloor - 1\right)\left((n_i + n_j - 1)^p - (n_i - 1)^p\right) + n_j\left(\lfloor \frac{k}{2} \rfloor - 1\right)^p \notag \\
        &-(n_j - k + 2) - (k-3)^{p+1} - (n_j - 1)^p \notag \\
        >& \left(\lfloor \frac{k}{2} \rfloor - 1\right)p(n_i - 1)^{p-1} n_j + n_j\left(\lfloor \frac{k}{2} \rfloor - 1\right)^p \notag \\
        &-(n_j - k + 2) - (k-3)^{p+1} - (n_j - 1)^p \label{inq:3} \\
        \geq& \left(\lfloor \frac{k}{2} \rfloor - 1\right) n_j (n_i - 1)^{p-1} - (k-3)^{p+1} \notag \\
        &+ \left(\lfloor \frac{k}{2} \rfloor - 1\right) n_j (n_i - 1)^{p-1} -(n_j - 1)^p - (n_j - k + 2) \notag \\
        \geq&  \left(\frac{k}{2} - \frac{3}{2}\right) k (2k - 1)^{p-1} - (k-3)^{p+1} \notag \\
        &+ n_j (n_i - 1)^{p-1} -(n_j - 1)^p - (n_j - k + 2) \notag \\
        \geq&  \left( k-3 \right) k \left(k - \frac{1}{2} \right)^{p-1} - (k-3)^{p+1} \notag \\
        &+ n_j (n_j - 1)^{p-1} -(n_j - 1)^p - (n_j - k + 2) \notag \\
        >& 0 \notag.
    \end{align}
    The inequality (\ref{inq:3}) holds due to Lemma~\ref{lemma:inequality} by letting $a = n_i -1$ and $b = n_j$. This completes the proof.
\end{proof}

\begin{claim}
    There is no component of order at most $k-1$. And there is no Type-A component.
\end{claim}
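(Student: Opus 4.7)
The plan is to combine the counting bounds from Claims~\ref{claim:k-2}, \ref{claim:k-1}, \ref{claim:type-A}, and \ref{claim:type-B} with the absorption lemmas in Claims~\ref{claim:BabsorbA} and \ref{claim:Babsorbclique}. The key observation is that the hypothesis $n \geq 10k$ forces the existence of a large Type-B component, into which all remaining small or Type-A components can be swallowed.

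First I would bound the total number of vertices lying in components of order at most $k-1$ together with any potential Type-A component. By Claim~\ref{claim:k-2} at most one component has order at most $k-2$, contributing at most $k-2$ vertices. By Claim~\ref{claim:k-1} at most five components have order exactly $k-1$, contributing at most $5(k-1)$ vertices. By Claim~\ref{claim:type-A} there is at most one Type-A component, and by Lemma~\ref{lem:condegpwpath} combined with Claim~\ref{claim:extremal} any Type-A component has order strictly less than $2k$, contributing at most $2k-1$ vertices. Adding these gives a total of at most $(k-2)+5(k-1)+(2k-1) = 8k-8$ vertices outside the Type-B components.

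Since $n \geq 10k > 8k-8$, at least one Type-B component must exist. By Claim~\ref{claim:type-B} there is exactly one, say $G_1$, and its order satisfies $n_1 \geq n - (8k-8) \geq 2k + 8 \geq 2k$. This is precisely the threshold needed to invoke the absorption claims. If there were any component $G_j$ of order at most $k-1$, Claim~\ref{claim:BabsorbA} would let us replace $G_1 \cup G_j$ with $W_{n_1+n_j,k-1,t}$ to strictly increase $D_p$, contradicting the maximality of $G$. Similarly, if there were a Type-A component $G_j$, Claim~\ref{claim:Babsorbclique} would yield the same kind of contradiction. Both conclusions of the claim follow.

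I do not expect a genuine obstacle here, since all the analytic work has already been done in the preceding claims; the step is essentially bookkeeping. The one point that must be checked carefully is the bound $n_1 \geq 2k$ on the Type-B component, which requires the slack provided by the hypothesis $n \geq 10k$ rather than a weaker linear bound, so that the absorption lemmas can be applied without modification.
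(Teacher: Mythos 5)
Your proposal is correct and follows essentially the same route as the paper: count the vertices in components of order at most $k-1$ and in a possible Type-A component (at most roughly $8k$ by Claims~\ref{claim:k-2}, \ref{claim:k-1}, \ref{claim:type-A} and Lemma~\ref{lem:condegpwpath}), use $n\geq 10k$ together with Claim~\ref{claim:type-B} to get a single Type-B component of order at least $2k$, and then apply Claims~\ref{claim:BabsorbA} and \ref{claim:Babsorbclique} to contradict maximality. The only difference is cosmetic bookkeeping (your bound $8k-8$ versus the paper's $6(k-1)+2k-1$), and you make explicit the appeal to Claim~\ref{claim:type-B} that the paper uses implicitly.
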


\begin{proof}
We first show that there is a component of order at least $2k$. By Claim~\ref{claim:k-2} and Claim~\ref{claim:k-1}, there are at most six components of order at most $k-1$. By Claim~\ref{claim:type-A}, there is at most one Type-A component. Lemma~\ref{lem:condegpwpath} tells us that the order of a Type-A component is at most $2k-1$. Combined with the assumption that $n\geq 10k$, there is a component of order at least $10k-6(k-1)-2k+1=2k+7$, which is of Type-B.

If there is a component of order at most $k-1$, then by Claim~\ref{claim:BabsorbA}, we can get an $n$-vertex $P_k$-free graph with a larger degree power sum. If there is a Type-A component, then by Claim~\ref{claim:Babsorbclique}, we can get an $n$-vertex $P_k$-free graph with a larger degree power sum. Now, we are done.
\end{proof}

As a result, $G$ has only one component which is of Type-B, i.e., $G$ is $W_{n,k-1,t}$. 
This proves the theorem.
\end{proof}

\bibliographystyle{abbrv}
\bibliography{ref.bib}
\label{key}  

\end{document}